\documentclass[11pt]{article}

\usepackage{amsmath, amssymb, amsthm, mathrsfs}
\usepackage{geometry, graphicx}
\usepackage{color, hyperref}
\usepackage{tikz} 

\hypersetup{
    colorlinks=true,
    linkcolor=blue,
    citecolor=red,
    urlcolor=cyan
}

\newtheorem{theorem}{Theorem}[section]

\newtheorem{proposition}[theorem]{Proposition}

\theoremstyle{definition}
\newtheorem{definition}[theorem]{Definition}
\newtheorem{remark}[theorem]{Remark}

\newcommand{\R}{\mathbb{R}}
\newcommand{\C}{\mathbb{C}}
\newcommand{\norm}[1]{\left\| #1 \right\|}
\newcommand{\bmo}{\mathrm{bmo}}
\newcommand{\vmo}{\mathrm{vmo}}
\newcommand{\om}{\boldsymbol{\omega}}
\newcommand{\xiVec}{\boldsymbol{\xi}}
\newcommand{\uVec}{\boldsymbol{u}}

\title{\textbf{Logarithmic Depletion of Vortex Stretching and Singularity Evasion in the 3D Navier-Stokes Equations}}
\author{Zoran Gruji\'c \\  \\  UAB }

\date{\today}

\begin{document}

\maketitle

\begin{abstract}
We present a geometric-analytic mechanism for the suppression of finite-time singularities in the 3D incompressible (unforced) Navier-Stokes equations for critical point singularities exhibiting $L^{3/2, \infty}$ spatial concentration of vorticity. We demonstrate that if the vorticity direction resides locally in a logarithmically weighted space of bounded mean oscillations, $\mathrm{bmo}_{1/|\log r|}$ -- a space failing the Dini condition and thus permitting wild oscillatory defects  -- the nonlinear vortex stretching is fundamentally depleted. By isolating a unidirectional geometric cancellation, we recast the stretching eigenvalue as a singular integral commutator. Utilizing a localized Coifman-Rochberg-Weiss estimate coupled with dyadic BMO tail bounds, we prove the stretching potential vanishes as a logarithmic envelope on shrinking super-level sets. This depletion forces the vorticity magnitude into a sub-critical Lorentz-Zygmund space via interpolated De Giorgi energy method. The logarithmic gain is subsequently transferred to the velocity field, forcing the geometric scale of local 1D sparseness below the uniform radius of spatial analyticity, ultimately averting the finite time blow-up via the harmonic measure maximum principle.
\end{abstract}

\tableofcontents
\vspace{0.5cm}

\section{Introduction}

The question of whether the 3D Navier-Stokes equations (NSE) can exhibit spontaneous formation of a singularity (no external force) remains a
fundamental open problem in mathematical physics. 
 From the perspective of fluid mechanics, potential singularity scenarios are typically modeled on physical configurations designed to maximize vortex stretching, such as the Moffatt-Kimura scenario of colliding vortex rings \cite{Moffatt2019, Moffatt2019R, Moffatt2023, Yao2020, Yao2020A, GrujicPAFA}. In this paper, we abstract the geometric-analytic nature of this mechanism to study a broader class of critical point singularities. More precisely, we focus on the critical regime where the vorticity magnitude concentrates at a rate of $|\om(x)| \sim |x|^{-2}$, placing it in the critical Lorentz space $L^{3/2, \infty}(\R^3)$.
\medskip

Looking for Navier-Stokes singularities in the critical realm -- and in particular, via self-similar profiles -- has been a deeply active area of research since Leray's pioneering work in the 1930s \cite{Leray1934}. Self-similar profiles in $L^3(\mathbb{R}^3)$ were ruled out by Ne\v{c}as, R\r{u}\v{z}i\v{c}ka and \v{S}ver\'ak \cite{Necas1996}, while profiles satisfying the local energy inequality were eliminated by Tsai \cite{Tsai1998}. Consequently, a major thrust of modern research seeks to identify asymptotically self-similar (continuously or discretely) blow-up profiles, theoretically, computationally, and recently via hybrid PINN methods \cite{Hou2023, DeepMind202X}.

\medskip

On the other hand, one can drop self-similarity altogether and ask whether simple boundedness in a critical space could prevent possible singularity formation. Escauriaza, Seregin and \v{S}ver\'ak established that $L^\infty\bigl((0, T), L^3(\R^3)\bigr)$ -- the hard endpoint of the Ladyzhenskaya-Prodi-Serrin scale \cite{Prodi1959, Serrin1962, Ladyzhenskaya1967} -- is a regularity class for the velocity \cite{ESS2003}, shifting focus to the Lorentz endpoints, $L^{3, \infty}(\R^3)$ for the velocity and $L^{3/2, \infty}(\R^3)$ for the vorticity. In this context, Seregin \cite{Seregin2015} demonstrated regularity under an assumption of smallness; see also the work by Barker and Prange \cite{BarkerPrange2021}. Furthermore, several recent formalisms are pointing -- in different ways -- to a critical regime: a recent work by Constantin, Ignatova and Vicol \cite{CIV202X} on putative Euler singularities, recent work by Gruji\'c and Xu \cite{Grujic2019, Grujic2020} ruling out asymptotically self-similar blow-up for hyper-dissipative Navier-Stokes models, as well as the fluid mechanics bookkeeping indicating that in a typical cascade construction \cite{Dascaliuc2013} leading to a potential singularity, the viscous mechanics forces the construction into a critical envelope.

\medskip

The geometric approach to breaking this criticality relies on understanding the local spatial coherence of the vorticity direction $\xiVec(x) = \om(x)/|\om(x)|$. Constantin and Fefferman \cite{Constantin1994, Constantin1993} famously demonstrated that -- for general flows -- if the vorticity direction is sufficiently regular (Lipschitz) in regions of the high vorticity magnitude, blow-up is prevented. The Lipschitz condition was weakened to $\frac{1}{2}$-H\"older by Beir\~ao da Veiga and Berselli \cite{BdVBe02}. In the case of Type-I blow-up, Giga and Miura \cite{Giga2011} significantly lowered this threshold, establishing that uniform continuity of the direction field is sufficient to rule out singularity formation; see also \cite{BdVGiGr16}. However, the requirement of uniform continuity forbids the presence of topological defects or singular phase structures within the highly concentrated vortex core.

\medskip

In this paper, we establish a rigorous geometric-analytic mechanism that breaks the critical $L^{3/2, \infty}$ scaling while assuming a substantially weaker condition that permits structural discontinuity. We assume only that the vorticity direction resides locally in a weighted space of bounded mean oscillations, $\bmo_{\phi}$, with a single-logarithmic weight $\phi(r) = \frac{1}{|\log r|}$. Because the weight fails the Dini condition ($\int_0 \frac{\phi(r)}{r} dr = \infty$), this functional framework supports highly oscillatory phase defects (e.g., configurations of the form $\sin(\log|\log|x||)$) and bypasses the classical continuity requirements of previous geometric criteria.

\medskip

The primary contribution of this paper is demonstrating that this singular, purely geometric condition is sufficient to deplete the vortex stretching. By isolating an exact unidirectional cancellation in the Biot-Savart law, we recast the stretching eigenvalue as a singular integral commutator. Coupling this with localized Coifman-Rochberg-Weiss singular integral estimates and dyadic BMO tail bounds, we show that the localized restricted Lorentz norm of the stretching eigenvalue vanishes. This forces the nonlinear stretching into a sub-critical envelope. Utilizing Lorentz real interpolation via De Giorgi energy method, we obtain a logarithmic improvement of the decay of the vorticity distribution function. (This first part of the paper could be thought of as a conceptual cousin of \cite{Bradshaw2015} where it was shown that the assumption on the logarithmic decay of the local mean oscillations of the vorticity direction upgraded Constantin's \emph{a priori} $L^1$ bound on the vorticity \cite{Constantin1990} to $L \log L$.) Next, this logarithmically improved decay of the distribution function is transferred from the vorticity to the velocity by O'Neil's convolution lemma, driving the local scale of sparseness of the velocity super-level sets into the dissipation range delineated by a lower bound on the radius of spatial analyticity. The endgame is provided by the harmonic measure maximum principle.

\medskip

This paper constitutes the first part of a two-part geometric-analytic framework. While the present work establishes that the singular $\bmo_{1/|\log r|}$ condition is sufficient -- in the case of a critical point singularity -- to trigger depletion of vortex stretching and avert finite time blow-up, a fundamental physical question remains: are the mechanics of the Navier-Stokes equations capable of propagating this specific topological regularity? In our companion paper \cite{GrujicProj2}, we address this question by analyzing the evolution of the vorticity direction as a perturbed Harmonic Map Heat Flow (HMHF) into the sphere (HMHF plus fluid transport plus repulsive drift born out of cross-diffusion plus tangential strain).

\section{Preliminaries and Functional Framework}

\subsection{Critical Point Singularities}
In order to focus our analysis on the critical endpoint of the functional scaling, we define the geometric profile of interest.

\begin{definition}\label{def:critical_point}
In what follows, a \emph{critical point singularity} refers to a potential finite-time blow-up profile centered at a spatial point (without loss of generality, the origin), where the vorticity magnitude locally scales as $O(|x|^{-2})$ and naturally inhabits the critical Lorentz space $L^{3/2, \infty}(\R^3)$. We factor this profile as $\omega(x,t) = \Phi(x,t)|x|^{-2}$, where the shape factor $\Phi(x,t)$ captures spatial variations (e.g., purely angular profiles $\Phi \equiv U(x/|x|)$ or oscillatory phases like $\Phi(x)=2+\sin\bigl(\log(1/|x|)\bigr)$ capturing the physics of discrete self-similarity). We assume this shape factor is either scale-invariant or log-periodic at the core, ensuring that the profile itself is uniformly bounded in the space-time while its spatial gradient satisfies the critical bound $|\nabla \Phi| \lesssim |x|^{-1}$ uniformly in time. As a direct consequence, for large amplitude thresholds $\lambda$, the super-level set $A_\lambda(t) = \{x \in \R^3 : \omega(x,t) > \lambda\}$ is contained within a spatial ball $B_R$ of radius $R \le C \lambda^{-1/2}$, where the geometric bounding constant $C$ is independent of time.
\end{definition}

\subsection{Lorentz Spaces}
For a measurable function $f$ on $\R^3$, let $f^*(s)$ denote its decreasing rearrangement. The Lorentz space $L^{p,q}(\R^3)$ is defined by the finiteness of the norm:
\begin{equation}
\norm{f}_{L^{p,q}} = \left( \int_0^\infty \big[ s^{1/p} f^*(s) \big]^q \frac{ds}{s} \right)^{1/q}, \quad 1 \le p < \infty, 1 \le q < \infty,
\end{equation}
with the standard modification for $q = \infty$: $\norm{f}_{L^{p,\infty}} = \sup_{s > 0} s^{1/p} f^*(s)$. We frequently utilize the sharp H\"older inequality in Lorentz spaces \cite{Hunt1966}: $\norm{fg}_{L^1} \le \norm{f}_{L^{p, \infty}} \norm{g}_{L^{p', 1}}$ where $1/p + 1/p' = 1$.

\subsection{Log-Weighted Bounded Mean Oscillations}
We consider a local log-weighted space of bounded mean oscillations \cite{Goldberg1979, Bradshaw2015}. For a locally integrable function $f$ and a radius scale $0 < r < 1/2$, we define the generalized $\bmo_{\phi}$ norm as:
\begin{equation}
\norm{f}_{\bmo_\phi} = \norm{f}_{L^\infty} + \sup_{x \in \R^3, 0 < r < 1/2} \frac{1}{\phi(r)} \frac{1}{|B_r(x)|} \int_{B_r(x)} |f(y) - c_{B_r(x)}| dy,
\end{equation}
where $c_{B_r(x)}$ is the local mean over the ball. Because the vorticity direction field is a unit vector field, the base Lebesgue norm trivially evaluates to $1$, anchoring the field globally without risking volume measure divergence. For the primary exposition of this paper, we choose the single-logarithmic weight $\phi(r) = \frac{1}{|\log r|}$.

\begin{remark}[The Dini Divergence and Topological Singularities]\label{rem:dini}
A fundamental characterization by Spanne and Campanato \cite{Campanato1963} establishes that a function in generalized $\bmo_\phi$ is uniformly continuous if and only if the weight $\phi$ satisfies the Dini condition, $\int_0 \frac{\phi(r)}{r} dr < \infty$. For our chosen geometric weight:
\begin{equation}
\int_0^\delta \frac{\phi(r)}{r} dr = \int_0^\delta \frac{1}{r |\log r|} dr = \Big[ -\log |\log r| \Big]_0^\delta = \infty.
\end{equation}
Hence, $\xiVec \in \bmo_{1/|\log r|}$ is a fundamentally weaker geometric condition than uniform continuity. It permits structural defects and intense phase singularities (e.g., oscillating topological configurations of the form $\sin(\log|\log|x||)$), bypassing the classical continuity requirements of \cite{Giga2011}.
\end{remark}

\begin{remark}[Phenomenological Context: Viscous $\vmo$ vs. Inviscid $\bmo_1$]
From the perspective of fluid phenomenology and turbulence scaling laws, being permanently trapped in an unweighted $\bmo_1$ space (local space of bounded mean oscillations) is an  \emph{inviscid artifact}. Geometrically, a scale-invariant $\bmo_1$ singularity corresponds to the formation of a sharp topological defect---a dipole bubble or ``tent-like'' structure---where the direction field undergoes a macroscopic jump and local mean oscillations completely fail to decay. This is precisely the singular mechanism observed in the Moffatt-Kimura scenario for the 3D Euler equations \cite{Moffatt2019, Moffatt2019R}, where two colliding vortex rings build an infinite-curvature tent due to the strict absence of viscous shear stress.

\medskip

In contrast, fully developed viscous turbulence is inconsistent with this topological locking. The generation of intense viscous shear stress at the apex of such a tent would inevitably tear the fold apart, forcing viscous reconnection and spatial smoothing. Hence, the fundamentals of viscous dynamics demand that local mean oscillations must exhibit \emph{some decay} as $r \to 0$. In functional terms, viscous flows naturally reside in $\vmo$ (local space of vanishing mean oscillations), where the supremum of the mean oscillation converges to zero alongside the ball radius:
\begin{equation}
\lim_{r \to 0} \left( \sup_{x \in \R^3} \frac{1}{|B_r(x)|} \int_{B_r(x)} |\xiVec(y) - c_{B_r(x)}| \, dy \right) = 0.
\end{equation}

Since the logarithmic weight $\phi(r) = 1/|\log r|$ fails the Dini test, the geometric condition $\xiVec \in \bmo_{\phi}$ permits highly oscillatory phase defects, merely requiring that the local mean oscillations do not flatline into an inviscid scale-invariant trap. As visualized in Figure \ref{fig:comic_panels}, this log-weighted space captures the chaotic limit of viscous structures. It accommodates the slowness of mean oscillation decay demanded by intense turbulent intermittency, yet provides enough mathematical relaxation to trigger the geometric depletion of the vortex stretching.
\end{remark}

\begin{figure}[htbp]
\centering
\resizebox{\textwidth}{!}{
\begin{tikzpicture}[x=1cm, y=1cm]
  \tikzset{
    envelope/.style={dashed, thick, color=blue!70},
    signal/.style={thick, color=red!75!black, join=round},
    axis/.style={->, thick, color=black!70}
  }

  \begin{scope}[xshift=0cm]
  \node[align=center, anchor=south] at (2.5, 4.2) {\textbf{Panel A: $C^0$ Continuity}\\[0.5ex] \small (Classical Safe Zone)};
  \draw[axis] (0, 1) -- (5, 1) node[right] {$x$};
  \draw[axis] (2.5, 0.5) -- (2.5, 3.8) node[above] {$\xiVec(x)$};
  \draw[signal, line width=1.5pt] (0, 1.2) .. controls (1.5, 1.2) and (1.5, 3) .. (2.5, 3) .. controls (3.5, 3) and (3.5, 1.2) .. (5, 1.2);
  \node[align=center, anchor=north, text width=4.5cm] at (2.5, 0.2) {\small Smooth alignment.\\[0.5ex] Topological defects forbidden.};
  \end{scope}

  \begin{scope}[xshift=5.5cm]
  \node[align=center, anchor=south] at (2.5, 4.2) {\textbf{Panel B: Unweighted $\bmo_1$}\\[0.5ex] \small (Moffatt-Kimura Inviscid Tent)};
  \draw[axis] (0, 1) -- (5, 1) node[right] {$x$};
  \draw[axis] (2.5, 0.5) -- (2.5, 3.8) node[above] {$\xiVec(x)$};
  \draw[envelope] (0, 3) -- (5, 3);
  \draw[envelope] (0, 1.2) -- (5, 1.2);
  \draw[signal, line width=1.5pt] (0, 1.4) -- (2.4, 1.4) -- (2.5, 3) -- (2.6, 1.4) -- (5, 1.4);
  \node[align=center, anchor=north, text width=4.5cm] at (2.5, 0.2) {\small Local mean oscillations rigid.\\[0.5ex] Viscous shear stress absent.};
  \end{scope}

  \begin{scope}[xshift=11cm]
  \node[align=center, anchor=south] at (2.5, 4.2) {\textbf{Panel C: Viscous $\vmo$}\\[0.5ex] \small (Log-Weighted $\bmo_{\phi}$)};
  \draw[axis] (0, 1) -- (5, 1) node[right] {$x$};
  \draw[axis] (2.5, 0.5) -- (2.5, 3.8) node[above] {$\xiVec(x)$};
  \draw[envelope, domain=0.01:2.4, samples=50] plot ({2.5 - \x}, {2.1 + 0.58*sqrt(\x)});
  \draw[envelope, domain=0.01:2.4, samples=50] plot ({2.5 + \x}, {2.1 + 0.58*sqrt(\x)});
  \draw[envelope, domain=0.01:2.4, samples=50] plot ({2.5 - \x}, {2.1 - 0.58*sqrt(\x)});
  \draw[envelope, domain=0.01:2.4, samples=50] plot ({2.5 + \x}, {2.1 - 0.58*sqrt(\x)});
  \draw[signal, domain=0.015:2.4, samples=600] plot ({2.5 - \x}, {2.1 + 0.58*sqrt(\x) * sin(2000*\x)});
  \draw[signal, domain=0.015:2.4, samples=600] plot ({2.5 + \x}, {2.1 + 0.58*sqrt(\x) * sin(2000*\x)});
  \node[align=center, anchor=north, text width=4.6cm] at (2.5, 0.2) {\small Topologically wild defects\\[0.5ex] bounded by slow envelope.};
  \end{scope}
\end{tikzpicture}%
}
\vspace{0.2cm} 
\caption{\textbf{Topological Phase Structures.} \emph{Panel A:} Uniformly continuous constraints strictly forbid sharp defects. \emph{Panel B:} The inviscid $\bmo_1$ trap permits rigid, non-decaying topological jumps, analogous to the infinite-curvature tent of the inviscid Moffatt-Kimura scenario. \emph{Panel C:} The viscous $\bmo_{\phi}$ assumption enforces vanishing mean oscillation ($\vmo$), yet permits infinitely oscillating, topologically severe phase defects provided they are mathematically bounded by a slowly decaying logarithmic envelope.}
\label{fig:comic_panels}
\end{figure}

\section{Geometric Framework: The Unidirectional Commutator}

The scalar evolution of the vorticity magnitude $\omega = |\om|$ is governed by the advection-diffusion inequality:
\begin{equation}\label{eq:mag_evolution}
(\partial_t + \uVec \cdot \nabla - \nu \Delta) \omega \le \alpha \omega,
\end{equation}
where the stretching eigenvalue is given by the Rayleigh quotient $\alpha = \xiVec \cdot S \xiVec$, and $S = \frac{1}{2}(\nabla \uVec + \nabla \uVec^T)$ 
is the rate-of-strain tensor \cite{Constantin1990}.
\medskip

The strain tensor $S$ is recovered from the full vorticity vector $\om = \omega \xiVec$ via a matrix of homogeneous Calder\'on-Zygmund singular integral operators $\mathcal{T}$, such that $S = \mathcal{T}(\omega \xiVec)$. We isolate the geometric structure of the stretching factor $\alpha$ by adding and subtracting the local evaluation of the direction field $\xiVec(x)$ inside the singular integral:
\begin{equation}
\begin{aligned}
S(x)\xiVec(x) &= \mathcal{T}(\omega \xiVec)(x) \xiVec(x) \\
&= \underbrace{\mathcal{T}\big(\omega (\xiVec - \xiVec(x))\big)(x) \xiVec(x)}_{\text{Commutator Part}} + \underbrace{\mathcal{T}\big(\omega \xiVec(x)\big)(x) \xiVec(x)}_{\text{Unidirectional Part}}.
\end{aligned}
\end{equation}

Taking the inner product with $\xiVec(x)$ extracts the stretching eigenvalue $\alpha(x)$. A key analytic cancellation occurs in the second term: $\xiVec(x) \cdot \big[\mathcal{T}(\omega \xiVec(x))(x) \xiVec(x)\big]$. The operator $\mathcal{T}(\omega \xiVec(x))$ computes the strain at $x$ generated by a hypothetical vorticity field $\tilde{\om}(y) = \omega(y)\xiVec(x)$ which is strictly unidirectional. Fundamentaly -- as observed in \cite{Constantin1994} -- the geometric kernel of this operator is proportional to $\hat{y} \cdot (\tilde\om \times \xiVec) = 0$.

\medskip

Due to this exact cancellation dictated by the Biot-Savart law, the purely scalar unidirectional strain evaluates to zero. The stretching eigenvalue is thus governed by the commutator of the Calder\'on-Zygmund operator $\mathcal{T}$ and pointwise multiplication by the direction field $\xiVec$:
\begin{equation}\label{eq:alpha_commutator}
\alpha(x) = \xiVec(x) \cdot [\mathcal{T}, \xiVec](\omega)(x) \cdot \xiVec(x).
\end{equation}

\section{Localized Commutator Estimates in Lorentz Spaces}

In order to execute De Giorgi energy estimates for the vorticity, we bound the restricted Lorentz norm of the stretching eigenvalue $\alpha$ over shrinking super-level sets $A_\lambda(t) = \{ x \in \R^3 : \omega(x,t) > \lambda \}$. As noted in Definition \ref{def:critical_point}, for a critical point singularity ($\omega \sim |x|^{-2}$), the support $A_\lambda(t)$ is contained entirely within a spatial ball $B_R$ centered at the origin with radius $R \le C\lambda^{-1/2}$.
\medskip

The core objective of this section is to prove that the geometric regularity of $\xiVec$ drives this localized norm to zero as $R \to 0$, effectively breaking the critical scale-invariance.

\begin{theorem}[Localized Logarithmic Commutator Smoothing]\label{thm:local_crw}
Suppose the vorticity conforms to the critical concentration profile (critical point singularity) uniformly on $(T^*-\epsilon, T^*)$ where $T^*$ is the first possible singular time, in particular $\omega \in L^\infty\bigl((T^*-\epsilon, T^*); L^{3/2, \infty}(\R^3)\bigr)$, and let the vorticity direction satisfy the uniform phase regularity $\xiVec \in L^\infty\bigl((T^*-\epsilon, T^*); \bmo_{\phi}(\R^3)\bigr)$ with $\phi(r) = 1/|\log r|$. For sufficiently small $R > 0$, the localized Lorentz norm of the stretching eigenvalue $\alpha(\cdot, t)$ on the ball $B_R$ satisfies the following asymptotic bound uniformly on $(T^*-\epsilon, T^*)$:
\begin{equation}
\norm{\alpha(\cdot, t)}_{L^{3/2, \infty}(B_R)} \le \frac{C_0}{|\log R|},
\end{equation}
where the constant $C_0 > 0$ depends exclusively on the $L^\infty_t$ suprema of $\omega$ and $\xiVec$, and is independent of time $t$.
\end{theorem}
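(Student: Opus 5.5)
The plan is to start from the commutator representation \eqref{eq:alpha_commutator}. Since $|\xiVec|=1$, it gives the pointwise bound $|\alpha(x)| \le |[\mathcal{T},\xiVec](\omega)(x)|$ on $B_R$. For $x \in B_R$ we split $\omega = \omega\chi_{B_{2R}} + \omega\chi_{B_{2R}^c}$ and treat the near-field and far-field commutators separately. Throughout, time enters only through the $L^\infty_t$ bounds on $\norm{\omega}_{L^{3/2,\infty}}$ and $\norm{\xiVec}_{\bmo_\phi}$, so uniformity in $t$ is automatic once every constant is expressed through these two quantities.

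\emph{Near field.} We apply a localized Coifman--Rochberg--Weiss estimate. The commutator $[\mathcal{T},b]$ is bounded on $L^p$ with norm $\lesssim \norm{b}_{BMO}$ for $1<p<\infty$, and by real interpolation it is also bounded on $L^{3/2,\infty}$. We do not use it with $b=\xiVec$ directly. Instead we take $b = \tilde\xiVec$, an extension of $\xiVec|_{B_{4R}}$ (for instance $\xiVec - c_{B_{4R}}$ cut off and reflected, or the standard BMO extension) whose global BMO norm is controlled by the oscillation of $\xiVec$ at scales $\le 4R$. This oscillation is at most $\norm{\xiVec}_{\bmo_\phi}\sup_{r\le 4R}\phi(r) \lesssim 1/|\log R|$. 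On $B_R$ the commutator with $\tilde\xiVec$ differs from the one with $\xiVec$ only by terms in which the kernel sees $\xiVec$ outside $B_{4R}$, and those belong to the far field. The near-field contribution is therefore
\begin{equation*}
\lesssim \frac{1}{|\log R|}\norm{\omega}_{L^{3/2,\infty}}.
\end{equation*}

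\emph{Far field.} For $|y|\ge 2R$ and $x\in B_R$ we write $\xiVec(y)-\xiVec(x) = (\xiVec(y)-c_{B_{2R}}) - (\xiVec(x)-c_{B_{2R}})$. The second piece, multiplied by $\mathcal{T}(\omega\chi_{B_{2R}^c})(x)$, has $L^{3/2,\infty}(B_R)$ norm controlled by the $\bmo_\phi$ smallness through John--Nirenberg together with a Lorentz H\"older inequality. It also needs a bound on $\mathcal{T}(\omega\chi_{B_{2R}^c})$ on $B_R$, which I would get from $|\omega|\lesssim|y|^{-2}$ via Definition~\ref{def:critical_point}. The first piece we bound pointwise by summing over dyadic annuli $A_k=\{2^kR\le|y|<2^{k+1}R\}$. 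The kernel is of size $(2^kR)^{-3}$, $\omega$ is of size $(2^kR)^{-2}$, and the dyadic BMO tail bound gives $|c_{B_{2^kR}}-c_{B_{2R}}| \lesssim \sum_{j\le k}\phi(2^jR)$. Each annulus thus contributes about $(2^kR)^{-2}\bigl(\phi(2^kR)+\sum_{j\le k}\phi(2^jR)\bigr)$. The geometric factor $2^{-2k}$ beats the growth $k\,\phi$ of the mean differences, so the sum is $\lesssim R^{-2}\phi(R) = R^{-2}/|\log R|$. Finally, $\norm{R^{-2}}_{L^{3/2,\infty}(B_R)} \sim R^{-2}|B_R|^{2/3} \sim 1$, which leaves $C/|\log R|$ with constants depending only on the suprema.

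\emph{Main obstacle.} I expect the hardest step to be the near-field localization, that is, justifying that the CRW constant sees only the oscillation of $\xiVec$ at scales $\lesssim R$. The problem is that the $\bmo_\phi$ norm of a local extension is governed by all scales up to $1/2$, where $\phi$ is not small. The route I would try first is the modified extension $\tilde\xiVec = \xiVec$ on $B_{4R}$ and $c_{B_{4R}}$ outside. Its BMO norm is bounded by $\sup_{r\le 8R}$ of the oscillations, by the standard argument for balls that straddle the boundary. This construction also makes the error terms exactly of the far-field type already handled above.
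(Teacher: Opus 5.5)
Your outline is the paper's: domination of $\alpha$ by the commutator, the split at $B_{2R}$, Coifman--Rochberg--Weiss on $L^{3/2,\infty}$ with an extended symbol in the near field, and in the far field subtraction of a ball mean, John--Nirenberg for the $\xiVec(x)-c$ piece and dyadic telescoping of means for the $\xiVec(y)-c$ piece.

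The gap is at the step you yourself call the crux, in the form you say you would try first: extension by a constant does not have small BMO norm. Let $b=\xiVec$ on $B_{4R}$ and $b=c_{B_{4R}}$ outside, and take $Q=B_\rho(x_0)$ with $x_0\in\partial B_{4R}$ and $\rho\ll R$. About half of $Q$ lies on each side of the sphere, so the mean oscillation of $b$ on $Q$ is $\gtrsim|\langle\xiVec\rangle_{Q\cap B_{4R}}-c_{B_{4R}}|$. This compares $\xiVec$ at scale $\rho$ with its mean at scale $R$. The $\bmo_\phi$ hypothesis controls that only through the telescoping sum $\sum_{\rho\le 2^j\rho\le 4R}\phi(2^j\rho)\sim\log\bigl(|\log\rho|/|\log R|\bigr)$, which is unbounded as $\rho\to0$ precisely because $\phi$ fails the Dini condition (Remark~\ref{rem:dini}). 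A direction field whose angle is $\sin(\log|\log|x-x_0||)$ near a point $x_0\in\partial B_{4R}$ is admissible and makes this discrepancy of order one. So the constant extension has BMO norm of order one rather than $O(1/|\log R|)$, and the near-field term loses the logarithmic gain entirely. The straddling-ball argument works for reflection-type extensions, which compare $\xiVec$ with itself at the same scale $\rho$, not for extension by a constant. Your diagnosis is also off: CRW uses only the unweighted BMO norm of the extended symbol, so the large-scale part of the $\bmo_\phi$ norm is irrelevant.

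The paper handles this step with Jones's extension theorem. A ball is a uniform domain with dilation-invariant constants, so $\xiVec|_{B_{2R}}$ has an extension $\tilde\xiVec$ with $\norm{\tilde\xiVec}_{\mathrm{BMO}(\R^3)}\le C\norm{\xiVec}_{\mathrm{BMO}(B_{2R})}\le C\phi(2R)\norm{\xiVec}_{\bmo_\phi}$, where $\mathrm{BMO}(B_{2R})$ involves only balls contained in $B_{2R}$. This is the ``standard BMO extension'' you mention only in passing, and it is what the argument needs. Moreover, since $\omega\chi_{B_{2R}}$ is supported in $B_{2R}$, the commutators with $\xiVec$ and with $\tilde\xiVec$ agree exactly on $B_{2R}$, so there are no error terms to move to the far field.

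Two smaller points:
\begin{itemize}
\item Bounding $\mathcal{T}(\omega\chi_{B_{2R}^c})$ and the annulus integrals through the pointwise profile $|\omega|\lesssim|y|^{-2}$ makes $C_0$ depend on $\sup|\Phi|$. The statement allows dependence only on the suprema of $\norm{\omega}_{L^{3/2,\infty}}$ and $\norm{\xiVec}_{\bmo_\phi}$. The paper's Lorentz--H\"older pairing with $\norm{|y|^{-3}\chi_{\{|y|>2R\}}}_{L^{3,1}}\sim R^{-2}$ avoids this.
\item In the dyadic sum, $\phi(2^jR)\le 2\phi(R)$ holds only while $2^jR\le R^{1/2}$, and $\bmo_\phi$ says nothing beyond scale $1/2$. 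The paper truncates at $|y|=R^{1/2}$ and uses $|\xiVec-c_R|\le 2$ beyond it, which costs only $O(R)$.
\end{itemize}
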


\begin{proof}
By the unidirectional geometric cancellation derived in \eqref{eq:alpha_commutator} and the unit length of the direction field, the stretching eigenvalue is pointwise bounded by the magnitude of the commutator: $|\alpha(x)| \le |[\mathcal{T}, \xiVec](\omega)(x)|$. 

\medskip

We split the vorticity source into a near-field component $\omega_{in} = \omega \chi_{B_{2R}}$ and a far-field component $\omega_{out} = \omega \chi_{\R^3 \setminus B_{2R}}$.
\medskip

\noindent \textbf{1. The Near-Field Estimate:} 

\medskip

The restriction of the direction field to the expanded ball $B_{2R}$ satisfies $\norm{\xiVec}_{\mathrm{BMO}(B_{2R})} \le C \phi(2R)$. By the classical Jones Extension Theorem for BMO functions on uniform domains \cite{Jones1980}, we can extend this restriction to a global function $\tilde{\xiVec}$ defined on all of $\R^3$ such that its unweighted global BMO norm is controlled by the local weight: $\norm{\tilde{\xiVec}}_{\mathrm{BMO}(\R^3)} \le C \phi(2R)$.
\medskip

Because we evaluate the operator for $x \in B_R$, the near-field commutator corresponds to the action of the extended vector field: $[\mathcal{T}, \xiVec](\omega_{in})(x) = [\mathcal{T}, \tilde{\xiVec}](\omega_{in})(x)$.
\medskip

The Coifman-Rochberg-Weiss theorem \cite{CRW1976} guarantees that singular integral commutators with BMO symbols are bounded on $L^p(\R^3)$ for $1 < p < \infty$. By Hunt's Marcinkiewicz-type interpolation theorem \cite{Hunt1966}, this boundedness extends to the reflexive Lorentz spaces. Since $p = 3/2 \in (1, \infty)$, the commutator maps $L^{3/2, \infty} \to L^{3/2, \infty}$ boundedly, with the operator norm scaling linearly with the BMO norm of the multiplier:
\begin{equation}
\begin{aligned}
\norm{ [\mathcal{T}, \xiVec](\omega_{in}) }_{L^{3/2, \infty}(B_R)} &\le \norm{ [\mathcal{T}, \tilde{\xiVec}](\omega_{in}) }_{L^{3/2, \infty}(\R^3)} \\
&\le C \norm{\tilde{\xiVec}}_{\mathrm{BMO}(\R^3)} \norm{\omega_{in}}_{L^{3/2, \infty}(\R^3)} \\
&\le C \phi(2R) \norm{\omega}_{L^{3/2, \infty}} \le \frac{C'}{|\log R|}.
\end{aligned}
\end{equation}

\medskip

\noindent \textbf{2. The Far-Field Estimate:} 

\medskip

For $x \in B_R$, the singularity of the kernel is safely avoided, as $|x-y| \ge |y| - |x| > |y|/2$ for all $y \in \R^3 \setminus B_{2R}$. We evaluate the far-field commutator by adding and subtracting the spatial average $c_R = \frac{1}{|B_R|}\int_{B_R} \xiVec(z) dz$:
\begin{equation}
[\mathcal{T}, \xiVec](\omega_{out})(x) = \underbrace{(c_R - \xiVec(x)) \cdot \mathcal{T}(\omega_{out})(x)}_{I_1(x)} + \underbrace{\int_{|y|>2R} K(x-y)\omega(y)(\xiVec(y) - c_R) dy}_{I_2(x)}.
\end{equation}

\textit{Term $I_1$ (The Macroscopic Strain):} 

\medskip

To bound the far-field Biot-Savart strain $\mathcal{T}(\omega_{out})(x)$, we estimate the convolution via the Lorentz-H\"older pairing $L^{3/2, \infty} \times L^{3, 1} \to L^1$:
\begin{equation}
|\mathcal{T}(\omega_{out})(x)| \le C \int_{|y|>2R} \frac{|\omega(y)|}{|y|^3} dy \le C \norm{\omega}_{L^{3/2, \infty}} \norm{|y|^{-3}\chi_{\{|y|>2R\}}}_{L^{3, 1}}.
\end{equation}
We determine the scaling of the truncated geometric kernel $f(y) = |y|^{-3}\chi_{\{|y|>2R\}}$ via its decreasing rearrangement. A direct volume calculation shows $f^*(t) = \min\big((2R)^{-3}, C t^{-1}\big)$. Computing its $L^{3,1}$ norm:
\begin{equation}
\begin{aligned}
\norm{f}_{L^{3,1}} &= \int_0^\infty t^{1/3} f^*(t) \frac{dt}{t} \\
&= \int_0^{C R^3} t^{-2/3} R^{-3} dt + \int_{C R^3}^\infty t^{-5/3} dt \\
&= C_1 R^{-2} + C_2 R^{-2} = C R^{-2}.
\end{aligned}
\end{equation}
Consequently, the macroscopic strain evaluates pointwise to a uniformly bounded vector of magnitude $O(R^{-2})$.

\medskip

We now pair this $O(R^{-2})$ uniform far-field strain with the localized $L^{3/2, \infty}(B_R)$ norm of $(c_R - \xiVec(x))$. By the John-Nirenberg property of BMO spaces \cite{JohnNirenberg1961}, higher Lebesgue space oscillations inherit the local weight over the measure of the domain:
\begin{equation}
\norm{\xiVec - c_R}_{L^2(B_R)} \le C |B_R|^{1/2} \norm{\xiVec}_{\mathrm{BMO}(B_R)} \le C R^{3/2} \phi(R).
\end{equation}
Utilizing the continuous local embedding $L^2(B_R) \hookrightarrow L^{3/2, \infty}(B_R)$, where the spatial scaling is determined by $|B_R|^{\frac{2}{3} - \frac{1}{2}} = |B_R|^{1/6} \sim R^{1/2}$, we obtain:
\begin{equation}
\norm{\xiVec - c_R}_{L^{3/2, \infty}(B_R)} \le C R^{1/2} \norm{\xiVec - c_R}_{L^2(B_R)} \le C R^{1/2} \left[ R^{3/2} \phi(R) \right] = C R^2 \phi(R).
\end{equation}
Multiplying by the uniform $R^{-2}$ far-field strain cancels the spatial dimension, isolating the logarithmic envelope: $\norm{I_1}_{L^{3/2, \infty}(B_R)} \le C \phi(R)$.
\medskip

\textit{Term $I_2$ (The Oscillation Tail):}

\medskip

To bound the integral against the spatial oscillation, we partition the exterior domain into a logarithmic dyadic shell $\Omega_{mid} = \{ 2R < |y| \le R^{1/2} \}$ and a physical macroscopic tail $\Omega_{far} = \{ |y| > R^{1/2} \}$.
\medskip

For the macroscopic tail $\Omega_{far}$, we abandon local BMO bounds and rely on the physical $L^\infty$ constraint $|\xiVec| = 1$, which gives the trivial uniform bound $|c_R - \xiVec(y)| \le 2$. The pointwise bound on $B_R$ follows the identical Lorentz-H\"older pairing used in $I_1$:
\begin{equation}
|I_{2, far}(x)| \le \int_{|y|>R^{1/2}} \frac{C}{|y|^3} |\omega(y)| dy \le C \norm{\omega}_{L^{3/2, \infty}} \norm{|y|^{-3}\chi_{\{|y|>R^{1/2}\}}}_{L^{3, 1}} \le C (R^{1/2})^{-2} = C R^{-1}.
\end{equation}
Because this bound is uniform over $B_R$, its localized Lorentz norm simply multiplies by the volume scaling $\norm{\chi_{B_R}}_{L^{3/2, \infty}} \approx R^2$. Hence, $\norm{I_{2, far}}_{L^{3/2, \infty}(B_R)} \le C R \ll \phi(R)$, rendering the physical tail deeply subdominant for small $R$.

\medskip

For the active dyadic shell $\Omega_{mid}$, we decompose the integration into concentric dyadic annuli $A_k = B_{2^{k+1}R} \setminus B_{2^k R}$ for $k=1, \dots, N$, where the threshold index is $N = \lfloor \frac{1}{2} \log_2(1/R) \rfloor$. Let $c_k = c_{B_{2^k R}}$ denote the spatial means over the expanding balls, noting that $c_1 = c_{B_{2R}}$ and $c_0 = c_R$.

\medskip

By the triangle inequality, the spatial oscillation on $A_k$ is bounded by $|\xiVec(y) - c_R| \le |\xiVec(y) - c_{k+1}| + |c_{k+1} - c_0|$. By telescoping the consecutive ball averages, the drift from the core accumulates logarithmically:
\begin{equation}
|c_{k+1} - c_0| \le \sum_{j=1}^{k+1} |c_j - c_{j-1}| \le C \sum_{j=1}^{k+1} \frac{|B_j|}{|B_{j-1}|} \frac{1}{|B_j|} \int_{B_j} |\xiVec - c_j| dy \le C \sum_{j=1}^{k+1} \phi(2^j R).
\end{equation}

For a fixed $x \in B_R$, the kernel on $A_k$ is bounded by $C(2^k R)^{-3}$. Integrating over the annulus and applying the Lorentz-H\"older pairing $L^{3/2, \infty} \times L^{3, 1}$ gives:
\begin{equation}\label{eq:I2_annulus}
\begin{aligned}
\int_{A_k} \frac{|\omega(y)|}{|y|^3} |\xiVec(y) - c_R| dy 
&\le \frac{C}{(2^k R)^3} \norm{\omega}_{L^{3/2, \infty}} \\
&\quad \times \Big( \norm{\xiVec - c_{k+1}}_{L^{3, 1}(A_k)} + \norm{c_{k+1} - c_0}_{L^{3, 1}(A_k)} \Big).
\end{aligned}
\end{equation}
The local $L^{3,1}$ norm of a constant over the support $A_k$ scales with measure as $|A_k|^{1/3} \approx 2^k R$. Moreover, by John-Nirenberg, the oscillation of $\xiVec$ over the encompassing ball $B_{k+1}$ obeys the identical scaling. Interpolating via the standard Lebesgue space $L^4$, we have:
\begin{equation}
\begin{aligned}
\norm{\xiVec - c_{k+1}}_{L^{3, 1}(B_{k+1})} 
&\le C |B_{k+1}|^{\frac{1}{3} - \frac{1}{4}} \norm{\xiVec - c_{k+1}}_{L^4(B_{k+1})} \\
&\le C (2^k R)^{1/4} \left[ (2^k R)^{3/4} \phi(2^{k+1}R) \right] \\
&\approx (2^k R) \phi(2^{k+1} R).
\end{aligned}
\end{equation}
Thus, the parenthetical term in \eqref{eq:I2_annulus} is bounded by $C(2^k R) \sum_{j=1}^{k+1} \phi(2^j R)$. Summing the contributions from all annuli gives:
\begin{equation}
|I_{2, mid}(x)| \le C \sum_{k=1}^N \frac{1}{(2^k R)^3} (2^k R) \sum_{j=1}^{k+1} \phi(2^j R) \approx \frac{C}{R^2} \sum_{k=1}^N 4^{-k} \sum_{j=1}^{k+1} \phi(2^j R).
\end{equation}
By Fubini's theorem for non-negative series, we reverse the order of summation to isolate the exponential decay of the Biot-Savart geometric scaling:
\begin{equation}
|I_{2, mid}(x)| \lesssim \frac{1}{R^2} \sum_{j=1}^{N+1} \phi(2^j R) \sum_{k=\max(1, j-1)}^N 4^{-k} \approx \frac{1}{R^2} \sum_{j=1}^{N+1} 4^{-j} \phi(2^j R).
\end{equation}
Because we deliberately truncated the shell at $N = \lfloor \frac{1}{2}\log_2(1/R) \rfloor$, the maximum dyadic radius within the sum is bounded by $2^{N+1} R \approx R^{1/2}$. Over this domain, the weight $\phi(r) = 1/|\log r|$ is monotonically increasing, so we can pull the supremum out of the series: $\phi(2^j R) \le \phi(R^{1/2}) = \frac{1}{|\log R^{1/2}|} = 2\phi(R)$. Hence,

\begin{equation}
|I_{2, mid}(x)| \le \frac{C}{R^2} 2\phi(R) \sum_{j=1}^\infty 4^{-j} = C \frac{\phi(R)}{R^2}.
\end{equation}

Since $I_{2, mid}$ evaluates to a uniformly bounded vector over $B_R$, its restricted Lorentz norm simply multiplies this pointwise bound by the volume scaling $|B_R|^{2/3} \approx R^2$. The singular scaling completely vanishes, resulting in $\norm{I_{2, mid}(\cdot, t)}_{L^{3/2, \infty}(B_R)} \le C \phi(R)$.
\medskip

Summing the completely scaled near-field and far-field components closes the localized estimate, validating the ``Logarithmic Pump''. Crucially, all constituent bounds evaluate exclusively through absolute dimensional constants and the uniform \emph{a priori} suprema $\sup_t \norm{\omega(\cdot, t)}_{L^{3/2,\infty}}$ and $\sup_t \norm{\xiVec(\cdot, t)}_{\bmo_\phi}$. Hence, the final geometric constant $C_0$ is entirely independent of time, preventing any temporal degradation of the bound as $t \to T^*$:
\begin{equation}
\norm{\alpha(\cdot, t)}_{L^{3/2, \infty}(B_R)} \le \norm{[\mathcal{T}, \xiVec](\omega_{in})}_{L^{3/2, \infty}(B_R)} + \norm{I_1}_{L^{3/2, \infty}(B_R)} + \norm{I_2}_{L^{3/2, \infty}(B_R)} \le \frac{C_0}{|\log R|}.
\end{equation}
\end{proof}

\section{Breaking Lorentz Criticality via Interpolated Energy Estimates}

With the localized logarithmic depletion secured, we break the critical $L^{3/2, \infty}$ scaling of the vorticity magnitude via adapted energy method. Standard De Giorgi level-set recurrences can be delicate for mildly critical equations due to potential sublinear volumetric losses. To bypass this technical obstruction, we employ real interpolation in Lorentz spaces. This maps the localized logarithmic geometric depletion directly onto a global energy bound, allowing us to extract the explicit structural exponent $\gamma_1$ governing the improved decay of the distribution function.
\medskip

For a high vorticity level $\lambda > 0$, define the time-dependent super-level set $A_\lambda(t) = \{ x \in \R^3 : \omega(x,t) > \lambda \}$, its measure $U_\lambda(t) = |A_\lambda(t)|$, and the truncated vorticity $\omega_\lambda = (\omega - \lambda)_+$. Testing the scalar evolution inequality \eqref{eq:mag_evolution} against $\omega_\lambda$ and integrating over $\R^3$ yields the truncated enstrophy balance:

\begin{equation}\label{eq:energy_balance}
\frac{1}{2} \frac{d}{dt} \norm{\omega_\lambda}_{L^2}^2 + \nu \norm{\nabla \omega_\lambda}_{L^2}^2 \le \underbrace{\int_{A_\lambda(t)} \alpha \omega_\lambda^2 dx}_{\text{Nonlinear Stretching}} + \underbrace{\lambda \int_{A_\lambda(t)} \alpha \omega_\lambda dx}_{\text{Linear Source}}.
\end{equation}

\subsection{Absorption of the Nonlinear Stretching}
To handle the nonlinear term, we apply the H\"older inequality in Lorentz spaces. Pairing the stretching potential with the quadratic term evaluated in $L^{3,1}$ yields:
\begin{equation}\label{eq:holder_lorentz}
\int_{A_\lambda(t)} \alpha \omega_\lambda^2 dx \le \norm{\alpha(\cdot, t)}_{L^{3/2, \infty}(A_\lambda(t))} \norm{\omega_\lambda^2}_{L^{3, 1}} = \norm{\alpha(\cdot, t)}_{L^{3/2, \infty}(A_\lambda(t))} \norm{\omega_\lambda}_{L^{6, 2}}^2.
\end{equation}

By the standard Lorentz-Sobolev embedding $\dot{H}^1(\R^3) \hookrightarrow L^{6,2}(\R^3)$, there exists a universal constant $C_S$ such that $\norm{\omega_\lambda}_{L^{6,2}}^2 \le C_S^2 \norm{\nabla \omega_\lambda}_{L^2}^2$. Thus, the nonlinear stretching term is bounded by the viscous dissipation provided the localized restricted norm of the potential is sufficiently small.

\medskip

In a purely critical baseline devoid of the geometric gain, the localized $L^{3/2, \infty}$ norm of the potential evaluates to a non-vanishing $O(1)$ constant, locking the nonlinear stretching in a scale-invariant stalemate. However, under the critical point singularity hypothesis ($\omega \sim |x|^{-2}$), the super-level set $A_\lambda(t)$ is contained within a shrinking spatial ball $B_R$ of radius $R \le C \lambda^{-1/2}$.
\medskip

By the localized logarithmic smoothing established in Theorem \ref{thm:local_crw}, the restricted norm decays uniformly in time:
\begin{equation}
\norm{\alpha(\cdot, t)}_{L^{3/2, \infty}(A_\lambda(t))} \le \norm{\alpha(\cdot, t)}_{L^{3/2, \infty}(B_{C\lambda^{-1/2}})} \le \frac{C_0}{|\log (C\lambda^{-1/2})|} \approx \frac{2C_0}{\log \lambda}.
\end{equation}

Consequently, because $C_0$ is independent of time, there exists an absolute, time-independent truncation level $\lambda_0 > 0$ such that for all $\lambda \ge \lambda_0$, the uniform bound $C_S^2 \norm{\alpha(\cdot, t)}_{L^{3/2, \infty}(A_\lambda(t))} \le \frac{\nu}{2}$ holds globally over the entire interval $(0, T^*)$. At these elevated frequencies, 
the nonlinear stretching is absorbed into the fluid dissipation:

\begin{equation}\label{eq:linear_balance}
\frac{1}{2} \frac{d}{dt} \norm{\omega_\lambda}_{L^2}^2 + \frac{\nu}{2} \norm{\nabla \omega_\lambda}_{L^2}^2 \le \lambda \int_{A_\lambda(t)} \alpha \omega_\lambda dx.
\end{equation}

\subsection{A Priori Interpolation of the Linear Source}
To address the linear source term, we apply the Lorentz-H\"older pairing:
\begin{equation}
\lambda \int_{A_\lambda(t)} \alpha \omega_\lambda dx \le \lambda \norm{\alpha(\cdot, t)}_{L^{3/2, \infty}(A_\lambda(t))} \norm{\omega_\lambda}_{L^{3, 1}}.
\end{equation}
In order to avoid the volumetric dependencies that complicate De Giorgi iterations, we evaluate the $L^{3,1}$ norm via real interpolation directly between the \emph{a priori} critical space $L^{3/2, \infty}$ and the dissipation space $L^{6,2}$.

\medskip

By the real interpolation properties, $L^{3,1} = (L^{3/2, \infty}, L^{6,2})_{\theta, 1}$. Solving the parameter equation $\frac{1}{3} = \frac{1-\theta}{3/2} + \frac{\theta}{6}$ yields $\theta = 2/3$. Consequently, we obtain the following bound:
\begin{equation}
\norm{\omega_\lambda}_{L^{3, 1}} \le C_I \norm{\omega_\lambda}_{L^{3/2, \infty}}^{1/3} \norm{\omega_\lambda}_{L^{6, 2}}^{2/3}.
\end{equation}
Because the flow exhibits a critical point singularity, the magnitude obeys the global-in-time  \emph{a priori} bound $\norm{\omega_\lambda}_{L^{3/2, \infty}} \le \norm{\omega}_{L^{3/2, \infty}} \le M_0$. Applying the Sobolev embedding gives:
\begin{equation}
\norm{\omega_\lambda}_{L^{3, 1}} \le C_I M_0^{1/3} \left( C_S \norm{\nabla \omega_\lambda}_{L^2} \right)^{2/3}.
\end{equation}

Substituting this bound into the linear source term yields:
\begin{equation}
\lambda \int_{A_\lambda(t)} \alpha \omega_\lambda dx \le \lambda \left[ \frac{2C_0}{\log \lambda} \right] \left( C_I M_0^{1/3} C_S^{2/3} \norm{\nabla \omega_\lambda}_{L^2}^{2/3} \right).
\end{equation}

We absorb the fractional gradient power into the left-hand side of \eqref{eq:linear_balance} using Young's inequality for products:
\begin{equation}
\begin{aligned}
\lambda \int_{A_\lambda(t)} \alpha \omega_\lambda dx 
&\le \frac{\nu}{4} \norm{\nabla \omega_\lambda}_{L^2}^2 + \frac{C_1}{\nu^{1/2}} \left( \frac{\lambda}{\log \lambda} M_0^{1/3} \right)^{3/2} \\
&= \frac{\nu}{4} \norm{\nabla \omega_\lambda}_{L^2}^2 + \frac{C_1 M_0^{1/2}}{\nu^{1/2}} \frac{\lambda^{3/2}}{(\log \lambda)^{3/2}}.
\end{aligned}
\end{equation}

Substituting this back into the linear balance \eqref{eq:linear_balance} yields a dissipative differential inequality for the truncated enstrophy $E_\lambda(t) = \norm{\omega_\lambda(\cdot, t)}_{L^2}^2$:
\begin{equation}\label{eq:direct_energy}
\frac{1}{2} \frac{d}{dt} E_\lambda(t) + \frac{\nu}{4} \norm{\nabla \omega_\lambda}_{L^2}^2 \le \frac{C_1 M_0^{1/2}}{\nu^{1/2}} \frac{\lambda^{3/2}}{(\log \lambda)^{3/2}}.
\end{equation}

\subsection{Genesis of the Structural Constant $\gamma_1$ via Coercive Damping}
To invert this energy bound back into the spatial distribution function, we leverage the fundamental criticality of the profile to introduce a coercive linear damping term. 
\medskip

By the standard Sobolev embedding and H\"older's inequality applied on the spatial support $A_\lambda(t)$:
\begin{equation}
E_\lambda(t) = \norm{\omega_\lambda}_{L^2}^2 \le \norm{\omega_\lambda}_{L^6}^2 |A_\lambda(t)|^{2/3} \le C_S^2 \norm{\nabla \omega_\lambda}_{L^2}^2 U_\lambda(t)^{2/3}.
\end{equation}
Because the flow exhibits a critical point singularity ($\omega \in L^\infty_t L^{3/2, \infty}_x$), the measure trivially obeys the global \emph{a priori} envelope $U_\lambda(t) \le M_0^{3/2} \lambda^{-3/2}$ uniformly in time. Hence, $U_\lambda(t)^{2/3} \le M_0 \lambda^{-1}$. Substituting this into the Sobolev bound furnishes a $\lambda$-dependent coercive lower bound on the enstrophy dissipation:
\begin{equation}
\norm{\nabla \omega_\lambda}_{L^2}^2 \ge \frac{\lambda}{C_S^2 M_0} E_\lambda(t).
\end{equation}

Injecting this lower bound into the energy balance \eqref{eq:direct_energy} transforms the relation into a first-order linear ordinary differential inequality:
\begin{equation}
\frac{d}{dt} E_\lambda(t) + \left( \frac{\nu}{2 C_S^2 M_0} \right) \lambda E_\lambda(t) \le \frac{2C_1 M_0^{1/2}}{\nu^{1/2}} \frac{\lambda^{3/2}}{(\log \lambda)^{3/2}}.
\end{equation}

Let $\mu = \frac{\nu}{2 C_S^2 M_0}$ denote the damping coefficient and $K(\lambda)$ the algebraic source term on the right. We solve this ODE on time interval  $(T^*-\epsilon, T^*)$. Because $T^*$ is assumed to be the first possible singular time, the flow is perfectly regular and bounded at the anchor time $t_0 = T^*-\epsilon$. Consequently, there exists a finite maximum amplitude $\Lambda_0 = \norm{\omega(\cdot, t_0)}_{L^\infty}$. 
\medskip

For any truncation threshold $\lambda \ge \Lambda_0$, the initial truncated vorticity vanishes, $\omega_\lambda(\cdot, t_0) \equiv 0$, yielding a zero initial enstrophy condition $E_\lambda(t_0) = 0$. Integrating the differential inequality via Gr\"onwall's lemma from $t_0$ to any $t \in (t_0, T^*)$ yields:
\begin{equation}
E_\lambda(t) \le \int_{t_0}^t e^{-\mu \lambda (t-\tau)} K(\lambda) d\tau = \frac{K(\lambda)}{\mu \lambda} \left( 1 - e^{-\mu \lambda (t - t_0)} \right) \le \frac{K(\lambda)}{\mu \lambda}.
\end{equation}

Evaluating the steady-state ratio establishes an asymptotic bound that is valid uniformly on $(T^*-\epsilon, T^*)$:
\begin{equation}
E_\lambda(t) \le \frac{4 C_1 C_S^2 M_0^{3/2}}{\nu^{3/2}} \frac{\lambda^{1/2}}{(\log \lambda)^{3/2}}.
\end{equation}

To recover the spatial distribution function, we employ Chebyshev's inequality on the super-level set $A_{2\lambda}(t)$, where $\omega_\lambda > \lambda$:
\begin{equation}
U_{2\lambda}(t) = |A_{2\lambda}(t)| \le \frac{1}{\lambda^2} \int_{A_{2\lambda}(t)} \omega_\lambda^2 dx \le \frac{1}{\lambda^2} E_\lambda(t).
\end{equation}
Substituting the dynamic steady-state enstrophy bound yields:
\begin{equation}
U_{2\lambda}(t) \le \frac{1}{\lambda^2} \left[ C_2 \frac{\lambda^{1/2}}{(\log \lambda)^{3/2}} \right] = \frac{C_2}{\lambda^{3/2} (\log \lambda)^{3/2}}.
\end{equation}
Because all bounding constants depend exclusively on uniform-in-time parameters, setting $2\lambda \mapsto \lambda$ guarantees the spatial distribution function belongs to a sub-critical envelope mapping exactly to the logarithmic exponent $\gamma_1$, uniformly on $(T^*-\epsilon, T^*)$:
\begin{equation}\label{eq:omega_distribution_final}
\sup_{t \in (T^*-\epsilon, T^*)} |\{ x \in \R^3 : \omega(x,t) > \lambda \}| \le \frac{C_{\omega}}{\lambda^{3/2} (\log \lambda)^{\gamma_1}}, \quad \text{where} \quad \gamma_1 = \frac{3}{2}.
\end{equation}

\begin{remark}[Rearrangement Implication]
By setting the level volume $v \approx \lambda^{-3/2}(\log \lambda)^{-3/2}$ and inverting for the frequency level $\lambda$, we find the decreasing rearrangement satisfies $\omega^*(v, t) \le C v^{-2/3} \log^{-1}(e/v)$.
\end{remark}

\section{Transferring the Logarithmic Gain to the Velocity Field}

To interface with the harmonic measure maximum principle in the final analytical stage, we transfer the sub-critical structural gain from the vorticity to the fluid velocity, $\uVec$. Analytically, the velocity field is enslaved to the vorticity via the weakly singular Biot-Savart integral. Because the fundamental solution of the Laplacian in $\R^3$ dictates that the Biot-Savart kernel $K(x)$ scales as $|x|^{-2}$, this convolution represents the Riesz potential of order 1, denoted $\mathcal{I}_1$.

\medskip

In the baseline critical case devoid of the logarithmic pump ($\omega \in L^{3/2, \infty}$), the classical Hardy-Littlewood-Sobolev (HLS) inequality dictates that $\mathcal{I}_1$ maps $L^{3/2, \infty}(\R^3) \to L^{3, \infty}(\R^3)$. To extract the improved logarithmic exponent without degrading the structural constants, we rely on the continuous mapping properties of the decreasing rearrangement.

\medskip

By O'Neil's convolution lemma for rearrangements in Lorentz spaces \cite{ONeil1963}, the decreasing rearrangement of the velocity, denoted $u^*(v, t)$ (where $v$ represents the 1D spatial volume measure to distinguish from dynamic time $t$), is bounded by the sum of two integral operators applied to the decreasing rearrangement of the vorticity $\omega^*(s, t)$ and the decreasing rearrangement of the Riesz kernel. Because the volume distribution of $|x|^{-2}$ in $\R^3$ yields $K^*(s) \sim s^{-2/3}$, we obtain the following bound:

\begin{equation}\label{eq:oneil_bound}
u^*(v, t) \le C \left( v^{-2/3} \int_0^v \omega^*(s, t) \, ds + \int_v^\infty s^{-2/3} \omega^*(s, t) \, ds \right).
\end{equation}

Recall that the measure of the super-level set of the vorticity scales as $\lambda^{-3/2}(\log \lambda)^{-3/2}$ uniformly on
time interval $(T^*-\epsilon, T^*)$. Analytically inverting this algebraically locked profile guarantees that the decreasing rearrangement 
uniformly satisfies $\omega^*(s, t) \le C_0 s^{-2/3} \log^{-1}(e/s)$ for small $s$, where $C_0$ is independent of $t$.
\medskip

We execute the two integral operators in \eqref{eq:oneil_bound} on this explicit envelope independently:
\begin{enumerate}
    \item \textbf{The Hardy Core Average:} The first term averages the highly concentrated inner core of the fluid profile. Substituting the rearrangement yields:
    \begin{equation}
    v^{-2/3} \int_0^v s^{-2/3} \log^{-1}(e/s) \, ds.
    \end{equation}
    For small $s$, the logarithmic weight behaves as a slowly varying function, meaning the integral is strictly dominated by the algebraic singularity. Applying integration by parts, the logarithmic factor factors out to leading order, yielding:
    \begin{equation}
    v^{-2/3} \left[ 3 s^{1/3} \log^{-1}(e/s) \Big|_0^v - \int_0^v 3 s^{1/3} \frac{1}{s \log^2(e/s)} \, ds \right] \approx 3 v^{-1/3} \log^{-1}(e/v).
    \end{equation}
    
    \item \textbf{The Macroscopic Far-Field Tail:} To evaluate the decay of the far-field strain contributions, we partition the integration at the physical boundary $s=1$:
    \begin{equation}
    \int_v^\infty s^{-2/3} \omega^*(s, t) \, ds = \int_v^1 s^{-2/3} \omega^*(s, t) \, ds + \int_1^\infty s^{-2/3} \omega^*(s, t) \, ds.
    \end{equation}
    Because the global vorticity rearrangement bounds as $\omega^*(s, t) \le M_0 s^{-2/3}$ uniformly in time, the physical tail is safely $O(1)$:
    \begin{equation}
    \int_1^\infty s^{-2/3} \omega^*(s, t) \, ds \le M_0 \int_1^\infty s^{-4/3} \, ds = 3 M_0 = O(1).
    \end{equation}
    For the singular integration $\int_v^1$, we substitute the improved logarithmic envelope. Applying integration by parts yields:
    \begin{equation}
    \left[ -3 s^{-1/3} \log^{-1}(e/s) \right]_v^1 + \int_v^1 3 s^{-1/3} \frac{1}{s \log^2(e/s)} \, ds \approx 3 v^{-1/3} \log^{-1}(e/v) + O(1).
    \end{equation}
    Because the volume measure $v$ associated with the singular core evaluation is intrinsically small ($v \to 0$), the active inverse polynomial dominates and absorbs the uniform $O(1)$ physical tail.
\end{enumerate}

Both the local averaging operator and the macroscopic far-field tail compute to the identical asymptotic envelope. Summing these bounds proves that the decreasing rearrangement of the velocity perfectly inherits the exact Zygmund weight, uniformly on $(T^*-\epsilon, T^*)$:
\begin{equation}\label{eq:u_rearrangement}
u^*(v, t) \le C v^{-1/3} \log^{-1}(e/v),
\end{equation}
which places the velocity field into the sub-critical Lorentz-Zygmund space $L^{3, \infty}(\log L)$ with a time-independent constant $C$.

\subsection{Inverting the Geometric Profile to Physical Space}
To directly interface with the 1D geometric scale of sparseness utilized in the harmonic measure argument, we must invert the decreasing rearrangement \eqref{eq:u_rearrangement} back to the spatial distribution function.

\medskip

By definition of the decreasing rearrangement, the volume measure of the super-level set $v = |\{ x \in \R^3 : |\uVec(x,t)| > \lambda \}|$ satisfies $\lambda = u^*(v, t)$. Setting the equation:
\begin{equation}
\lambda \approx C v^{-1/3} \log^{-1}(e/v),
\end{equation}
we analytically invert for $v$. To leading order, cubing both sides gives $\lambda^3 \approx C^3 v^{-1} \log^{-3}(e/v)$. Since the logarithmic correction is slowly varying, taking the logarithm of the algebraic relation $v \sim \lambda^{-3}$ gives $\log(e/v) \approx 3 \log \lambda$. Substituting this back into the algebraic scaling isolates the volume measure $v$, establishing that the structural logarithmic parameter cubes uniformly on $(T^*-\epsilon, T^*)$:
\begin{equation}\label{u-log}
|\{ x \in \R^3 : |\uVec(x,t)| > \lambda \}| \le \frac{C_u}{\lambda^{3} (\log \lambda)^{3}},
\end{equation}
where $C_u$ is a time-independent constant.
\medskip

Physically, this analytical inversion demonstrates that the local fluid velocity satisfies $|\uVec(x,t)| \approx |x|^{-1} |\log|x||^{-1}$ near the singular core. Because the purely critical convective envelope behaves as $|x|^{-1}$, this verifies that the logarithmic pump structurally starves the localized advective transport. The convective profile is forced into a regime sufficiently weak that the scale of geometrical sparseness it induces decays faster than the scale of uniform local diffusive analyticity, setting the final stage for the harmonic measure maximum principle.

\section{Singularity Evasion via the Harmonic Measure Maximum Principle}

In this final section, we utilize the logarithmic improvement on the velocity distribution function \eqref{u-log} to avert the potential singularity formation.
\medskip

The philosophical core of this endgame lies in a dynamical competition of spatial scales. The fluid velocity possesses a local-in-time uniform radius of spatial analyticity, $\rho_s$, generated by parabolic viscous diffusion. Simultaneously, the macroscopic distribution function controls the scale of local spatial sparseness, $r_s$, which characterizes the topological fragmentation of the advective transport. By demonstrating that the logarithmic pump forces $r_s$ to decay faster than the diffusive scale $\rho_s$ as the amplitude diverges, we place the flow into the domain of the harmonic measure maximum principle, generating a contradiction with the assumption of a finite-time blow-up.

\subsection{Analyticity, Sparseness, and the Harmonic Measure}

We begin by formally defining local geometric sparseness \cite{Grujic2013}, which mathematically captures the spatial intermittency of the fluid structure.
\begin{definition}
For a spatial point $x_0 \in \R^3$ and a density parameter $\delta \in (0,1)$, an open set $S$ is \emph{3D $\delta$-sparse around $x_0$ at scale $r$} if its relative volumetric density satisfies $\frac{|S \cap B_r(x_0)|}{|B_r(x_0)|} \le \delta$. Furthermore, $S$ is \emph{1D $\delta$-sparse around $x_0$ at scale $r$} if there exists a unit vector $\nu$ such that the linear density satisfies $\frac{|S \cap (x_0 - r\nu, x_0 + r\nu)|}{2r} \le \delta$.
\end{definition}
By standard geometric restriction, local 3D $\delta$-sparseness at a given scale implies 1D $\delta^{1/3}$-sparseness at the identical scale. The open sets of interest are the super-level sets of the velocity magnitude. For a fixed threshold parameter $\lambda \in (0,1)$ and time $t$, we define:
\begin{equation}\label{super}
 V_t = \{x \in \mathbb{R}^3: \, |\uVec(x, t)| > \lambda \|\uVec(t)\|_\infty\}.
\end{equation}

To exploit this 1D sparseness, we utilize the rotational invariance of the Navier-Stokes equations to align the direction of sparseness with a coordinate axis, immersing the 1D line into the complex plane $\C$. Since 1D projections of the velocity field are harmonic (and thus subharmonic) on the resulting complex disk, we invoke the harmonic measure maximum principle.
\begin{proposition}[Ransford \cite{Ransford1995}]\label{hm}
Let $\Omega$ be an open and connected subset of $\mathbb{C}$ such that its boundary has nonzero Hausdorff dimension and $K$ a Borel subset of the boundary. If $u$ is subharmonic on $\Omega$ satisfying $u(z) \le M_0$ on $\Omega$ and $\limsup_{z\to\zeta} u(z) \le m$ for $\zeta \in K$, then for any $z \in \Omega$:
\begin{equation}
u(z) \le m \, h(z,\Omega,K) + M_0\big(1 - h(z,\Omega,K)\big),
\end{equation}
where $h$ denotes the harmonic measure of $K$ with respect to $\Omega$, evaluated at $z$.
\end{proposition}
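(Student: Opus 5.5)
The statement is a two-constant estimate, and the plan is to obtain it from the Perron characterization of harmonic measure. The idea is to rescale $u$ into a superharmonic competitor that dominates the indicator of $K$ at the boundary.

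First, the trivial cases. Since $0 \le h(z,\Omega,K) \le 1$, the right-hand side is a convex combination of $m$ and $M_0$. If $m \ge M_0$, it is at least $M_0 \ge u(z)$ and there is nothing to prove. So assume $m < M_0$.

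Next, the setting. We work in the extended plane $\C_\infty$, with $\partial\Omega$ taken there. A compact set of positive Hausdorff dimension has positive logarithmic capacity, so the hypothesis on the boundary makes $\partial\Omega$ non-polar. By standard potential theory (Ransford, Ch.~4), $\Omega$ then admits a harmonic measure $\omega_{\Omega,z}$. For a Borel set $K\subset\partial\Omega$, Wiener's resolutivity theorem gives $h(z,\Omega,K)=\omega_{\Omega,z}(K)$, and this coincides with the Perron solution of the boundary data $\chi_K$:
\[
h(\cdot,\Omega,K)=\inf\Bigl\{ w \ \text{superharmonic on } \Omega,\ w \text{ bounded below},\ \liminf_{z\to\zeta} w(z)\ge \chi_K(\zeta)\ \ \forall \zeta\in\partial\Omega\Bigr\}.
\]
I would take this identification as the definition. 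The main thing to check is that the Borel hypothesis is exactly what resolutivity of $\chi_K$ requires.

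The core step is to define
\[
w(z)=\frac{M_0-u(z)}{M_0-m},\qquad z\in\Omega.
\]
Because $u$ is subharmonic and $M_0-m>0$, the function $w$ is superharmonic. The bound $u\le M_0$ gives $w\ge 0$, so $w$ is bounded below and $\liminf w\ge 0$ at every boundary point. At points $\zeta\in K$, the hypothesis $\limsup_{z\to\zeta}u\le m$ gives $\liminf_{z\to\zeta}w\ge 1$. Thus $w$ lies in the upper Perron class for $\chi_K$, and hence $w(z)\ge h(z,\Omega,K)$ for every $z\in\Omega$. Unwinding, this reads $M_0-u(z)\ge (M_0-m)\,h(z,\Omega,K)$, which is exactly
\[
u(z)\le m\,h(z,\Omega,K)+M_0\bigl(1-h(z,\Omega,K)\bigr).
\]

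The expected obstacle is not this algebra but the potential-theoretic foundation. We need that harmonic measure of an arbitrary Borel $K$ equals the upper Perron solution with no exceptional set, even though $\partial\Omega$ may contain irregular points and $\infty$. If the Perron route runs into trouble there, I would fall back on the extended maximum principle. That principle states: a superharmonic function bounded below with $\liminf\ge 0$ off a polar subset of a non-polar boundary is nonnegative. One approximates $\chi_K$ from above by lower semicontinuous functions, using inner and outer regularity of $\omega_{\Omega,z}$, and applies this principle to $w-P$, where $P$ is the Poisson–Perron integral of each approximant. Passing to the limit then gives $w\ge \omega_{\Omega,z}(K)$.
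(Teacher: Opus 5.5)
Your argument is correct, and it is essentially the standard proof of the two-constant theorem in Ransford's book, which the paper quotes without proof. The function $w=(M_0-u)/(M_0-m)$ lies in the upper Perron class of $\chi_K$, and Brelot's extension of Wiener's theorem (bounded Borel boundary data are resolutive, with Perron solution $\int\phi\,d\omega_{\Omega,z}$) identifies $\overline{H}_{\chi_K}$ with $h(\cdot,\Omega,K)$, so your fallback is never needed.

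If you do use the fallback, the approximation as you wrote it goes in the wrong direction. Approximating $\chi_K$ from above by lower semicontinuous data gives no comparison with $w$, since you only control $w$'s boundary values from below. Instead, take a compact $F\subset K$ with $\omega_{\Omega,z}(F)$ close to $\omega_{\Omega,z}(K)$. At every regular $\zeta\in\partial\Omega$ (hence quasi-everywhere, by Kellogg's theorem) you have $\limsup_{z'\to\zeta}\omega_{\Omega,z'}(F)\le\chi_F(\zeta)\le\liminf_{z'\to\zeta}w(z')$. Applying the extended minimum principle to $w-\omega_{\Omega,\cdot}(F)$ then gives $w\ge\omega_{\Omega,\cdot}(F)$, and inner regularity finishes the argument.
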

To quantify this measure within the unit disc $\mathbb{D}$, we utilize Solynin's extremal bound \cite{Solynin1997}: if $K \subset [-1,1]$ such that $|K| = 2\alpha$ and the origin $0 \notin K$, the harmonic measure at 0 is bounded below by the symmetric geometric configuration: 
\begin{equation}\label{sol_bound}
h(0,\mathbb{D}\setminus K,K) \ge \frac{2}{\pi} \arcsin \left( \frac{1-(1-\alpha)^2}{1+(1-\alpha)^2} \right).
\end{equation}

Finally, we quantify the competing scale of parabolic smoothing via the local-in-time uniform spatial analyticity bound.
\begin{theorem}[Gu \cite{Gu2010}]\label{radius}
Let $\uVec_0 \in L^\infty(\R^3)$ and $M \in (1, \infty)$. Then there exists a unique mild solution to the velocity formulation of the 3D Navier-Stokes system $\uVec \in C_w([0, T], L^\infty(\R^3))$ where
\[
  T \ge \frac{1}{c_1(M)} \frac{\nu}{\|\uVec_0\|^2_\infty},
\]
and for any $t \in (0, T]$ the solution $\uVec(\cdot, t)$ is the $\R^3$-restriction of a holomorphic function defined in the complex domain
\[
 \Omega_t = \biggl\{ x+iy \in \C^3: \, |y|  <  \frac{1}{c_2(M)} (\nu t)^{1/2} \biggr\}
\]
satisfying $\|\uVec(t)\|_{L^\infty(\Omega_t)}\le M \|\uVec_0\|_\infty$.
\end{theorem}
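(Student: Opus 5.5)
The plan is to follow the classical route to local-in-time analyticity for the mild formulation of the Navier--Stokes system, with the velocity taken in $L^\infty$. Write $\uVec(t) = e^{\nu t\Delta}\uVec_0 - \int_0^t e^{\nu(t-s)\Delta}\mathbb{P}\nabla\cdot(\uVec\otimes\uVec)(s)\,ds$, where $\mathbb{P}$ is the Leray projector. Complexify the spatial variable to $z = x+iy$. The heat kernel extends holomorphically:
\[
e^{\nu t\Delta}f(x+iy) = (4\pi\nu t)^{-3/2}\int e^{-(x+iy-w)\cdot(x+iy-w)/(4\nu t)}f(w)\,dw,
\]
and its modulus is bounded by $e^{|y|^2/(4\nu t)}$ times the real Gaussian. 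Hence on the strip $|y|<c(\nu t)^{1/2}$ the free evolution is bounded by $e^{c^2/4}\|\uVec_0\|_\infty$. We choose $c=1/c_2(M)$ small enough that this amplification factor is at most, say, $(1+M)/2 < M$. This fixes $c_2(M)$.

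Next, run a Picard iteration (equivalently, a contraction mapping) in the space $X_T$ of functions that are holomorphic on the time-dependent strips $\Omega_t$, with norm $\sup_{t\le T}\|\uVec(t)\|_{L^\infty(\Omega_t)}$. For the bilinear term, we need a bound on the complexified kernel of $e^{\nu(t-s)\Delta}\mathbb{P}\nabla\cdot$, which is the Oseen kernel. The real Oseen kernel $K_{t-s}$ satisfies $\|K_{t-s}\|_{L^1}\lesssim(\nu(t-s))^{-1/2}$. The crucial geometric point is that at time $s$ the integrand is only known to be holomorphic on the narrower strip $\Omega_s\subset\Omega_t$. We therefore shift the integration contour by $iy'$ with $|y'|<c(\nu s)^{1/2}$, leaving an imaginary offset $|y-y'|\lesssim c\bigl((\nu t)^{1/2}-(\nu s)^{1/2}\bigr)\le c(\nu(t-s))^{1/2}$ for the kernel. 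The Oseen kernel also extends holomorphically, with Gaussian-type decay, in a strip of width proportional to $(\nu(t-s))^{1/2}$, so the complexified kernel still has $L^1$ norm $\lesssim(\nu(t-s))^{-1/2}$. This gives
\[
\Bigl\|\int_0^t\cdots\Bigr\|_{L^\infty(\Omega_t)}\le C\int_0^t(\nu(t-s))^{-1/2}\,ds\,\|\uVec\|_{X_T}^2 = C'(\nu^{-1}t)^{1/2}\|\uVec\|_{X_T}^2 .
\]
With $\|\uVec\|_{X_T}\le M\|\uVec_0\|_\infty$, the map sends the ball of radius $M\|\uVec_0\|_\infty$ to itself and contracts provided
\[
\frac{1+M}{2}+C'M^2\|\uVec_0\|_\infty (T/\nu)^{1/2}\le M .
\]
This holds once $T\le \nu/(c_1(M)\|\uVec_0\|_\infty^2)$, which fixes $c_1(M)$. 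Holomorphy of each iterate is preserved because the holomorphic kernels are integrated against bounded data. The limit is holomorphic, since uniform limits of holomorphic functions are holomorphic, and it restricts to the real mild solution. Weak-$*$ continuity in $L^\infty$ and uniqueness in $L^\infty$ follow from the same contraction estimate applied on the real line.

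I expect the main obstacle to be the complexified Oseen kernel estimate. Because $\mathbb{P}$ is nonlocal, the kernel is not a Gaussian, and one has to verify that its analytic continuation into the strip $|{\rm Im}|\lesssim(\nu(t-s))^{1/2}$ keeps $L^1$ norm $O((\nu(t-s))^{-1/2})$. The natural approach is to write it in Fourier form, $e^{-\nu(t-s)|\xi|^2}\xi\,(I-\xi\otimes\xi/|\xi|^2)$, and use that multiplication by $e^{y\cdot\xi}$ is dominated by the Gaussian factor. To control $L^1$ rather than $L^\infty$ norms, one then uses the pointwise bound $|K(x)|\lesssim(|x|+(\nu\tau)^{1/2})^{-4}$, proved via scaling and Paley--Wiener type arguments. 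Matching the contour-shift geometry with the shrinking-in-$s$ strips is the delicate bookkeeping step.
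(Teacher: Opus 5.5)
The paper gives no proof of this statement (it is quoted from \cite{Gu2010}). Your argument is the standard complexified mild-formulation proof of such results, in the spirit of Gruji\'c--Kukavica: you continue the heat kernel into the strip, then shift the Duhamel contour so that the integrand sits on the narrower strip $\Omega_s$ while the kernel sees an imaginary offset of at most $c\nu^{1/2}(t^{1/2}-s^{1/2}) \le c(\nu(t-s))^{1/2}$, and you close a contraction on a time interval of length $\nu/(c_1(M)\|\uVec_0\|_\infty^2)$. This is correct. The only slip is the phrase ``Gaussian-type decay'' for the continued Oseen kernel: because of $\mathbb{P}$ it decays only like $|x|^{-4}$ on the strip, but as you note yourself at the end, that algebraic bound is exactly what gives the needed $L^1$ estimate $\lesssim(\nu(t-s))^{-1/2}$.
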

Because the local existence time scales critically as $T \sim \nu / \|\uVec_0\|_\infty^2$ and the parabolic heat kernel smoothing radius scales as $y \sim (\nu t)^{1/2}$, the uniform radius of spatial analyticity dynamically obeys dimensionally consistent scaling $\rho \sim (\nu T)^{1/2} \sim \nu / \|\uVec_0\|_\infty$.

\subsection{Parameter Synchronization and Escape Times}

To ensure complete algebraic consistency across the final calculation, we lock our parameters \emph{a priori}. We set the baseline 3D sparseness parameter to $\delta = 3/4$, which implies a 1D sparseness density of $(3/4)^{1/3}$. The geometric void complement thus possesses a linear relative measure of $\alpha = 1 - (3/4)^{1/3}$. Substituting this directly into Solynin's extremal bound \eqref{sol_bound} yields the harmonic threshold:
\begin{equation}\label{eq:h_star}
h^* = \frac{2}{\pi} \arcsin \left( \frac{1-(3/4)^{2/3}}{1+(3/4)^{2/3}} \right).
\end{equation}
We define our macroscopic leap parameter $M > 1$ as the algebraic solution to the convex combination $\frac{1}{2} h^* + (1-h^*) M = 1$, and subsequently fix the super-level set threshold at $\lambda = \frac{1}{2M}$. The identical $M$ is applied universally in Theorem \ref{radius}.

\medskip

To capture the singularity dynamics, we define a time $t$ as an \emph{escape time} if $\|\uVec(s)\|_\infty > \|\uVec(t)\|_\infty$ for all $s \in (t, T^*)$. Local well-posedness in $L^\infty$ guarantees that every sufficently large threshold has its own escape time, plus we can always select an escape time arbitrarily close to $T^*$.

\subsection{The Main Result: Averting the Blow-Up}

The stage is now set.

\begin{theorem}\label{main}
Let $\uVec_0 \in L^\infty(\R^3)$ and consider a unique, spatially analytic solution $\uVec$ on the interval $(0, T^*)$ where $T^*$ is the first possible singular time. 
Suppose that the vorticity conforms to the critical concentration profile (critical point singularity) uniformly on $(T^*-\epsilon, T^*)$ for some $0 < \epsilon < T^*$ (in particular $\omega \in L^\infty\bigl((T^*-\epsilon, T^*); L^{3/2, \infty}(\R^3)\bigr)$).
 If the vorticity direction satisfies the uniform-in-time phase regularity $\xiVec \in L^\infty\bigl((T^*-\epsilon, T^*); \bmo_{1/|\log r|}(\R^3)\bigr)$, then $T^*$ is not a singular time and the blow-up is averted.
\end{theorem}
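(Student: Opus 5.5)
The plan is to argue by contradiction, assuming $T^*$ is a genuine singular time, so that $\|\uVec(t)\|_\infty \to \infty$ as $t \to T^*$.

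\textbf{Step 1: sub-critical velocity bound.} Theorem \ref{thm:local_crw}, the interpolated De Giorgi bound \eqref{eq:omega_distribution_final} and the O'Neil transfer together give \eqref{u-log} uniformly on $(T^*-\epsilon, T^*)$:
\[
|\{|\uVec(\cdot,t)|>\lambda\}| \le C_u \lambda^{-3}(\log\lambda)^{-3}.
\]

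\textbf{Step 2: choose an escape time and restart.} Pick an escape time $t$ close to $T^*$ and set $U = \|\uVec(t)\|_\infty$, which is large. Applying Theorem \ref{radius} with initial datum $\uVec(t)$ yields an existence interval of length $T \ge \nu/(c_1 U^2)$. At the time $s = t + \nu/(c_1U^2)$ the solution is holomorphic in a strip of width
\[
\rho \sim c_2^{-1}(\nu s')^{1/2} \sim \nu/U, \qquad s' = s-t,
\]
and satisfies $|\uVec| \le MU$ there. Since $s < T^*$ can be arranged, the escape property gives $\|\uVec(s)\|_\infty > U$.

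\textbf{Step 3: sparseness of the super-level set at scale $\rho$.} Consider $V_s$ from \eqref{super} at threshold $\lambda\|\uVec(s)\|_\infty$ with $\lambda = 1/(2M)$. I expect $\|\uVec(s)\|_\infty \le MU$, so the threshold is comparable to $U$. Step 1 then gives
\[
|V_s| \lesssim U^{-3}(\log U)^{-3},
\]
while $|B_\rho| \sim \nu^3 U^{-3}$. Hence for $U$ large, $|V_s \cap B_\rho(x_0)|/|B_\rho| \lesssim (\log U)^{-3} < 3/4$ at every $x_0$. So $V_s$ is 3D $\delta$-sparse with $\delta = 3/4$ at scale $\rho$, and therefore 1D $(3/4)^{1/3}$-sparse, around every point. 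This is the step where the logarithm matters: without it the ratio is only $O(1)$ and depends on $\nu$, i.e.\ the critical stalemate.

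\textbf{Step 4: harmonic measure bound.} Fix $x_0$ with $|\uVec(x_0,s)|$ close to $\|\uVec(s)\|_\infty$. Rotate so the sparse direction is a coordinate axis and rescale the disk of radius $\rho$ centred at $x_0$ in that complex line to $\mathbb{D}$. Each component $u_j$ is harmonic there and bounded by $MU$. On the set $K$ where $|\uVec| \le \lambda\|\uVec(s)\|_\infty$ (of relative length at least $\alpha$), $u_j \le \|\uVec(s)\|_\infty/(2M)$. Proposition \ref{hm} with Solynin's bound \eqref{eq:h_star}, together with the choice $\tfrac12 h^* + (1-h^*)M = 1$, then gives $|\uVec(x_0,s)| \le$ (a strict fraction, close to $1$) of $\|\uVec(s)\|_\infty$, up to normalization. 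Combined with the escape-time monotonicity, this contradicts near-maximality. If $0\in K$ one simply shifts $x_0$ slightly.

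\textbf{Main obstacle.} The hardest part will be the bookkeeping of sup norms. One must compare $\|\uVec(s)\|_\infty$ with $MU$ (the analytic bound) and with the threshold $\lambda\|\uVec(s)\|_\infty$ so that the convex combination yields strict contraction below $\|\uVec(s)\|_\infty$. I would handle this by applying the maximum principle to the bound $MU$ relative to $U$, and would tune $M$ and the escape time as in the Gruji\'c sparseness framework.
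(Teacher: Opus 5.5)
Your route matches the paper's: \eqref{u-log} at time $s$, sparseness of $V_s$ at the analyticity scale, then Proposition~\ref{hm} with Solynin's bound and $\tfrac12 h^*+(1-h^*)M=1$, contradicting the escape property. Your direct check $|V_s|/|B_\rho|\lesssim(\log U)^{-3}$ is equivalent to the paper's comparison $r_s\le\rho_s$.

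The gap is in Step~4, which is exactly the bookkeeping you flag as the main obstacle and leave open. As written it gives no contradiction. Apply the maximum principle to each component $u_j$ with threshold $m=\|\uVec(s)\|_\infty/(2M)\le U/2$ (using $\|\uVec(s)\|_\infty\le MU$ from Theorem~\ref{radius}) and disk bound $MU$. This gives $|u_j(x_0,s)|\le[\tfrac12h^*+(1-h^*)M]\,U=U$ for each $j$, hence only $|\uVec(x_0,s)|\le\sqrt3\,U$. The constant was tuned to be exactly $1$ (numerically $h^*\approx0.06$, $M\approx1.03$), so there is no slack to absorb $\sqrt3$. In fact $\sqrt3\,U$ is weaker than the trivial bound $\|\uVec(s)\|_\infty\le MU$. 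Also, the quantity to bound by is $U=\|\uVec(t)\|_\infty$, not ``a strict fraction of $\|\uVec(s)\|_\infty$ up to normalization.''

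The paper closes the argument with a single scalar harmonic function adapted to the point. Take $v(z)=\Re\,[\uVec(x_0+z\nu,s)\cdot\boldsymbol{e}]$ with $\boldsymbol{e}=\uVec(x_0,s)/|\uVec(x_0,s)|$. It is harmonic on the disk, being a fixed linear combination of components. It satisfies $v(0)=|\uVec(x_0,s)|$, $v\le|\uVec|\le\|\uVec(s)\|_\infty/(2M)\le U/2$ on $K$, and $v\le MU$ on the disk. The convex combination then yields $|\uVec(x_0,s)|\le U$. If $x_0\in K$ you get $|\uVec(x_0,s)|\le U/2$ directly, so no shifting is needed. Since this holds for every $x_0$, $\|\uVec(s)\|_\infty\le U$, contradicting $\|\uVec(s)\|_\infty>U$.

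Alternatively, you could keep components and retune the parameters. Your ratio $|V_s|/|B_\rho|=O((\log U)^{-3})$ permits any fixed $\delta$, and taking $\delta$ small pushes $h^*$ toward $1$. With $\lambda$ small as well, you can arrange $\sqrt3\,M(\lambda h^*+1-h^*)\le1$. With $\delta=3/4$ and the paper's $M$, however, the componentwise version fails.
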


\begin{proof}
Let $t \in (T^*-\epsilon, T^*)$ be an escape time. By the uniform analyticity bounds of Theorem \ref{radius} applied with the parameter $M$ established above, we evolve the flow forward to a time $s = t + T_t$, where $T_t$ is the maximal local analyticity time. At this subsequent evaluation time $s$, the solution $\uVec(\cdot, s)$ is analytic with a uniform spatial radius strictly bounded below by $\frac{1}{c_3} \frac{\nu}{\|\uVec(t)\|_\infty}$. Because $t$ is an escape time ($\|\uVec(s)\|_\infty > \|\uVec(t)\|_\infty$), we may conservatively bound this scale from below via the dynamic amplitude:
\begin{equation}\label{rho_dynamic}
\rho_s = \frac{1}{c_3} \frac{\nu}{\|\uVec(s)\|_\infty}.
\end{equation}

We now compute the scale of geometric sparseness of the advective transport. By substituting the peak amplitude level into the strictly sub-critical spatial distribution function derived in \eqref{u-log}, the physical volume of the super-level set $V_s$ at threshold $\beta = \lambda \|\uVec(s)\|_\infty = \frac{1}{2M} \|\uVec(s)\|_\infty$ satisfies:

\begin{equation}
|V_s| \le \frac{C}{\lambda^3 \|\uVec(s)\|_\infty^3 \log^3\bigl(e + \lambda \|\uVec(s)\|_\infty\bigr)}.
\end{equation}
To achieve 3D $\delta$-sparseness ($\delta = 3/4$) at a spatial scale $r_s$, we require $|V_s \cap B_{r_s}| / |B_{r_s}| \le 3/4$. Trivially bounding the intersection by the total measure $|V_s|$, the required scale resolves algebraically to $r_s \approx |V_s|^{1/3}$. Extracting the cube root isolates a single logarithmic gain:
\begin{equation}\label{eq:rs_bound}
r_s \le \frac{c_4}{\|\uVec(s)\|_\infty \log\bigl(e + \lambda \|\uVec(s)\|_\infty\bigr)}.
\end{equation}

It is here that the logarithmic pump breaks the scaling balance of the Navier-Stokes equations. The diffusive analyticity radius scales as $\rho_s \sim \|\uVec(s)\|_\infty^{-1}$. However, due to the logarithmic depletion of the vortex stretching, the advective sparseness scale decays logarithmically faster, asymptotically satisfying $r_s \sim \|\uVec(s)\|_\infty^{-1} \log^{-1}(\|\uVec(s)\|_\infty)$.
\medskip

Consequently, as the flow attempts to form the singularity ($\|\uVec(s)\|_\infty \to \infty$), the ratio $r_s / \rho_s \sim 1/\log(\|\uVec(s)\|_\infty)$ vanishes. By choosing the escape time $t$ sufficiently close to $T^*$, we guarantee that $r_s \le \rho_s$. T

\medskip

Let $x_0 \in \R^3$ be arbitrary. We aim to show $|\uVec(x_0, s)| \le \|\uVec(t)\|_\infty$, which contradicts $t$ serving as an escape time. By translational invariance, we set $x_0 = 0$. Assume $|\uVec(0, s)| > 0$ (otherwise the desired bound holds trivially). By the rotational invariance of the equations, we orient the coordinate system such that the direction of 1D sparseness lies along the $x_1$-axis, and we immerse this line into the complex plane to define the open disk $D_{r_s} \subset \C$. 

\medskip

To apply a scalar maximum principle, we define the 1D projection along the unit vector $\boldsymbol{e} = \uVec(0,s)/|\uVec(0,s)|$ as $F(z) = \uVec(z, s) \cdot \boldsymbol{e}$. Because $\uVec(z,s)$ is holomorphic on $D_{r_s}$, its real part $v(z) = \Re(F(z))$ is a real-valued harmonic (and thus subharmonic) function on the disk. At the origin, it satisfies $v(0) = \uVec(0, s) \cdot \boldsymbol{e} = |\uVec(0, s)|$.

\medskip

We construct the boundary control set $K$ as the compact complement of $V_s \cap (-r_s, r_s)$ within $[-r_s, r_s]$. Due to the established 1D sparseness, the Lebesgue measure of the void strictly satisfies $|K| \ge 2r_s(1 - (3/4)^{1/3})$.

\medskip

To execute the harmonic measure estimate on the subharmonic function $v(z), $ we analyze two distinct phase states:
\begin{enumerate}
    \item \textbf{Case $0 \in K$ (The Origin is in the Void):} By definition of the complement set $K$, the physical origin $x=0$ lies outside the super-level set $V_s$. Thus, the velocity magnitude trivially satisfies $|\uVec(0, s)| \le \lambda \|\uVec(s)\|_\infty = \frac{1}{2M} \|\uVec(s)\|_\infty$. By the $L^\infty$ evolution bound of Theorem \ref{radius} on the interval $[t, s]$, we have $\|\uVec(s)\|_\infty \le M \|\uVec(t)\|_\infty$. Therefore, $|\uVec(0, s)| \le \frac{1}{2}\|\uVec(t)\|_\infty$. This contradicts the definition of $t$ as an escape time.
    
    \item \textbf{Case $0 \notin K$ (The Origin is in the Core):} Because the harmonic measure is scale-invariant with respect to the dilation $z \to z/r_s$, Solynin's extremal property \cite{Solynin1997} immediately guarantees that the harmonic measure of the complement set evaluated at the origin satisfies $h(0, D_{r_s}, K) \ge h^*$. 
    
    We apply Ransford's maximum principle \cite{Ransford1995} (Proposition \ref{hm}) to the subharmonic function $v(z)$. For any real point $x \in K$, we have $x \notin V_s$, which implies $|\uVec(x, s)| \le \frac{1}{2M}\|\uVec(s)\|_\infty$. Consequently, $v(x) \le |F(x)| \le |\uVec(x, s)| \le \frac{1}{2M}\|\uVec(s)\|_\infty$. Thus, the threshold on $K$ is $m = \frac{1}{2M}\|\uVec(s)\|_\infty$. The global bound on $D_{r_s}$ is $M\|\uVec(t)\|_\infty$ (again via Theorem \ref{radius}). We obtain:
    \begin{equation}
    \begin{aligned}
    |\uVec(0, s)| = v(0) 
    &\le h^* \left( \frac{1}{2M} \|\uVec(s)\|_\infty \right) \\
    &\quad + (1 - h^*) \Big( M \|\uVec(t)\|_\infty \Big).
    \end{aligned}
    \end{equation}
    Substituting the maximal evolution bound $\|\uVec(s)\|_\infty \le M \|\uVec(t)\|_\infty$ into the first term yields:
    \begin{equation}
    |\uVec(0, s)| \le \left[ \frac{1}{2} h^* + (1 - h^*) M \right] \|\uVec(t)\|_\infty.
    \end{equation}
    By the algebraic rigidification of our parameters, the bracketed coefficient identically evaluates to $1$. Thus, $|\uVec(0, s)| \le \|\uVec(t)\|_\infty$. Since the spatial origin was arbitrary, we conclude globally that $\|\uVec(s)\|_\infty \le \|\uVec(t)\|_\infty$, yielding the contradiction.
\end{enumerate}

In all topological configurations, the fluid forbids the existence of the subsequent state $s$. The assumption that $T^*$ acts as a blow-up time is thereby proven false, and the finite-time singularity is averted.
\end{proof}

\section{Acknowledgments}
The work is supported in part by the National Science Foundation grant DMS 2307657.

\end{document}